%% file: root.tex
\documentclass[letterpaper, 10 pt, journal, twoside]{IEEEtran}

\IEEEoverridecommandlockouts                              

\usepackage{graphics} 
\usepackage{epsfig} 
\usepackage{epstopdf}
\usepackage{amsmath} 
\usepackage{amssymb}  
\usepackage{amsthm}
\usepackage{amsfonts}
\usepackage{subfigure}
\input{ACSD-preamble}

\title{
Adaptive dynamic programming using Lyapunov function constraints
}

\author{Thomas G{\"o}hrt and Pavel Osinenko and Stefan Streif
\thanks{The authors are with the Automatic Control and System Dynamics Lab at the Technische Universit{\"a}t Chemnitz, Germany
        {\tt\small \{thomas.goehrt,pavel.osinenko,stefan.streif\}
@etit.tu-chemnitz.de}}%
}

\begin{document}

\maketitle
\thispagestyle{empty}
\pagestyle{empty}

\begin{abstract}
This work is concerned with a stabilizing adaptive dynamic programming (ADP) approach to approximate solution of a given infinite-horizon optimal control problem.
Since the latter problem cannot, in general, be solved exactly, a parametrized function approximator for the infinite-horizon cost function is introduced in ADP (so called ``critic'').
This critic is used to adapt the parameters of the function approximator.
The so called ``actor'' in turn derives the optimal input of the system.
It is a notoriously hard problem to guarantee closed-loop stability of ADP due to the use of approximation structures in the control scheme.
Since at least stabilizability is always assumed in the analyses of ADP, it is justified to invoke a respective Lyapunov function.
The proposed ADP scheme explicitly uses the said Lyapunov function to simultaneously optimize the critic and guarantee closed-loop stability.
A Hessian-free optimization routine is utilized for the actor and critic optimization problems.
Convergence to prescribed vicinities of the optima is shown.
A computational study showed significant performance improvement for the critic-based approach compared a nominal stabilizing controller for a range of initial conditions.
\end{abstract}
\begin{IEEEkeywords}
Adaptive control, Optimal control, Optimization algorithms
\end{IEEEkeywords}
\section{Introduction}\label{sec:intro}

\IEEEPARstart{C}{onsider} a control-affine nonlinear system
\begin{equation}\label{eq:system}
\dot x(t) = f(x(t)) + g(x(t)) u,
\end{equation}
where $x \in \R^n$ denotes the state, $u \in \R^m$ denotes the input, $f : \R^n \to \R^n$ denotes the internal dynamics model and $g : \R^n \to \R^{n  \times m}$ denotes the input coupling function.

The infinite-horizon optimal control problem can be stated as follows:
\begin{equation}
\label{eq:HJB-ctrl}
	H^*(x_0) = \min\limits_v \int \limits_0^{\infty} r(x(t), v(x(t))) \mathrm{dt}, \quad x_v(0) = x_0, 
\end{equation}
where $r : \R^n \times \R^m \to \R_{\geq0}$ denotes the utility function and $v : \R^n \to \R^m$ denotes the so called policy. 
The so called \emph{optimal value function} $H^*$ was shown to satisfy the respective Hamilton-Jacobi-Bellman (HJB) equation \cite{Kirk2004-opt-ctrl}.
The HJB is a partial differential equation and is notoriously hard to solve analytically. 
In the framework of dynamic programming (DP) \cite{Bellman1957-DP}, the problem is addressed by state space discretization and propagation of the value function backwards in time. 
For online control, such an approach is oftentimes not suitable, which motivated the development of adaptive dynamic programming (ADP). 
ADP uses a parametrized approximations of the value function (see \cite{Sutton1998-RL,Lewis1995-opt-ctrl}) and develops parameter adjustment laws. 
The respective approximator is referred to as the so called \emph{critic}. 
Specific formulations are based \eg on Galerkin approximation \cite{Beard1997-ADP-Galerkin}, neural networks \cite{Werbos1990-menu,Bertsekas1995-NDP}, polynomial models \cite{Jiang2015-ADP-poly,Kamalapurkar2016-ADP-RL-regul}, etc.
It should be remarked that there is in general no quantifiable relation between the value function and its approximation.
Some approaches to ADP rely on strong assumptions on the value function itself \cite{Jiang2015-ADP-poly}.
Once an approximation is computed, the optimal controller is found by optimizing the HJB in terms of the critic.
This is referred to as the so called \emph{actor}.

One of the major challenges of ADP is guaranteeing closed-loop stability \cite{Balakrishnan2008-ADP-issues}. 
Existing analyses often assume that the optimization problem (\ref{eq:HJB-ctrl}) be analytically solvable at all times \cite{Al-Tamimi2008-VI-ADP,Liu2014-PI-ADP-conv-prf,Heydari2014-better-prf-ADP}.
A relaxed version of an infinite horizon optimal control problem which preserves stability was addressed in \cite{Jiang2015-ADP-poly} by assuming a Lyapunov function to be sum of squares and using semidefinite programming.
In general, stabilizing controllers can be made optimal only under conformity of the value function and the respective control Lyapunov function \cite{Primbs1999-opt-ctrl-CLF}.
In \cite{Primbs1999-opt-ctrl-CLF}, a particular example of a two-dimensional nonlinear harmonic oscillator is considered and a comparison of a stabilizing and optimal controller along with the respective value functions is provided.
In \cite{wang2018intelligent}, it is assumed that a positive definite function serves as a Lyapunov function for the approximate optimal controller. 
This function is incorporated into the adaptation scheme to guarantee stability.
A further observation can be made from the literature that ADP often relies on exhaustive offline learning phases for the critic and obtaining a model of the system dynamics in the form of \eg neural networks \cite{wang2012optimal}.

In this work, it is suggested to consider a pure online ADP scheme under a stabilizability constraint provided by a Lyapunov function, not necessarily related to an approximate optimal controller.
Such an approach relaxes the original problem \eqref{eq:HJB-ctrl}.
In general, as pointed out above, no relation between a relaxed and the original formulation of \eqref{eq:HJB-ctrl} is known without strong assumptions on the value function.
For instance, in the linear case, it is at least known that the value function is quadratic.
In such a case, a suitable Lyapunov function might be also chosen quadratic.
The resulting optimization problems in this work are shown to be time-varying and strongly convex.
Time-varying optimization problems have, in general, time-varying optima \cite{2016-fazlyap-ip}.
Recent developments in optimization algorithms for continuous time systems by means of ordinary differential equations show that the inverse Hessian of the cost function is required to converge to and track the optimum \cite{2016-fazlyap-ip}.
In particular, the optimization routine consists of a tracking controller, which compensates for the time-varying behavior of the optimization problem, and an update rule for convergence to the optimum.
However, calculating the inverse Hessian might be expensive for practical purposes \cite{2017-simonetto-first-order-pred-corr,2017-simonetto-pred-corr}.

The main contribution of this work is an ADP approach with guaranteed closed loop stability using a Lyapunov function constraint. 
Gradient descent optimization routines with time-varying gains are suggested to avoid the costly calculation of inverse Hessian matrices. 
It is shown that convergence to prescribed vicinites can be achieved by specifically tuned gains for gradient descent.
The suggested approach comprises of a pure online learning of the critic part of the algorithm.
A cost improvement of the controller using the proposed approach compared to a nominal controller is demonstrated in a case study.

The rest of the paper is organized as follows.
The next section introduces required preliminaries.
In Section \ref{sec:prob-statement}, the considered problem statement is given.
The update rules  as well as an analysis of optimization problems are provided in Section \ref{sec:analysis}. 
Finally, a case study and discussion are given in Section \ref{sec:results}.

\section{Preliminaries}\label{sec:preliminaries}
This section introduces required preliminaries and notation.
Function arguments are omitted from here on to simplify presentation and are only provided at the first instance or when necessary.

In the following, $\Vert \cdot \Vert$ denotes the Euclidean 2-norm for vectors and the spectral norm for matrices.
Further, $\R^+$ denotes the positive reals and $\R_0^+$ denotes the positive reals including zero, respectively, $\R^-$ represents the negative reals.
The notation $f_x$ for a function $f$ means the partial derivative of $f$ with respect to $x$.

The central assumption in this work is:
\begin{assumption}\label{ass:lyap}
There exists a continuously differentiable feedback controller $v(x)$ and a twice continuously differentiable radially unbounded Lyapunov function $V$ such that for all $x \in \R^n \setminus \{0\}$, it holds that
\begin{equation}\label{eq:lyap_equation}
\dot V(x) = V_x^\top(x) ( f + g v(x)) \leq - k(x),
\end{equation}
where $k(x)>0$ for all $x \in \R^n \setminus \{0\}$ and $k(0)=0$.
Moreover it is assumed that $k$ is continuously differentiable.
\end{assumption}
\begin{remark}
	Assumption \ref{ass:lyap} in this work can be relaxed to the case of a local Lyapunov function.
	The global case is used rather for simplicity of notation.
\end{remark}
The convergence analysis in Section \ref{sec:conv-anal} will require the notion of persistence of excitation:
\begin{definition}[Persistence of excitation \cite{Modares2013-PI-input-constr}]\label{def:pe}
A continuous matrix-valued function $A$ of time $t$ is called uniformly $T$-persistently excited at a \emph{level} $\delta$ if, for all $t$ the matrix
\begin{equation}
\int\limits_{t-T}^t A(\tau) \mathrm{d \tau} - \delta I
\end{equation}
is positive semidefinite. Here, $I$ denotes the identity matrix.
\end{definition}
\begin{definition}[Strongly convex]\label{def:strong-convex}
A real-valued twice continuously differentiable function $J : \mathcal{D} \to \R$ is called strongly convex with parameter $m$, if $J_{xx}(x) - m I$ is positive semidefinite for all $x \in \mathcal{D}$, where $J_{xx}$ denotes the Hessian of $J$.
\end{definition}
The utility function is assumed 
\begin{equation}\label{eq:utility_fcn}
r(x,u)\coloneqq q(x) + p(u),
\end{equation}
where $q(0)=0$ and $q(x) > 0$ for all $x \in \R^n \setminus \{0 \}$. 
The function $p : \R^m \to \R_0^+$ is assumed to be twice continuously differentiable and strongly convex with parameter $m_1$.

The critic is chosen in a parametric form as follows:
\begin{equation}\label{eq:critic}
\hat J(x,w(t)) \coloneqq w^\top(t) \varphi(x),
\end{equation}
where $\varphi : \R^n \to \R^d, \varphi \in \mathcal{C}^1 $ is the regressor vector and $w \in \R^d$ is the parameter vector.
\begin{definition}[Barrier function]\label{def:barrier-fcn}
A twice continuously differentiable convex function $B : \R_0^- \to \R \cap \{\infty\}$ is called a \emph{barrier function}, if $B(z) < \infty$ for all $z <0$ and $\lim\limits_{z \to 0^-} B(z) \to \infty$.
\end{definition}

Finally, the internal dynamics model and the input coupling function are assumed to be continuously differentiable.

The next section poses the problem statement.

\section{Problem statement}\label{sec:prob-statement}

Consider the following optimization problem for the actor:
\begin{equation}
\begin{split}
\min_{u} \quad & \Lambda (x,u,w) \\
\mathrm{s.t.} \quad & \dot x = f + g u, \\
& z(x,u) \leq \gamma, \\
\end{split}
\end{equation}
with 
\begin{equation}\label{eq:def-Ju}
\Lambda(x,u,w) \coloneqq \underbrace{w^\top \dot \varphi}_{\hat J_x^\top \dot x} + r
\end{equation}  
and 
\begin{equation}\label{eq:lyap-constraint}
z(x,u)\coloneqq V_x^\top ( f + g u) + k,
\end{equation}
where $0 <\gamma$.
The function $z$ is defined based on Assumption \ref{ass:lyap}.
The cost function $\Lambda$ is known as the Bellman error in ADP and is an infinitesimal version of \eqref{eq:HJB-ctrl} using the approximator $\hat J$.
The choice of this cost function is rather standard in ADP \cite{Lewis2009-ADP}.
This optimization problem is relaxed using a barrier function $B$ as follows
\begin{equation}\label{eq:actor-opt-problem}
\begin{split}
\min_{u} \quad & (\Lambda  + \mu(t) B(z - \gamma))\\
\mathrm{s.t.} \quad & \dot x = f + g u, \\
\end{split}
\end{equation}
where $\mu$ satisfies $\mu(t) > 0$ and $\dot\mu(t)<0$ for all $t \geq 0$.
Since $z$ converges to zero, if $x$ converges to zero, $\gamma >0$ ensures a well defined optimization problem even if $z=0$.
Notice that a solution of \eqref{eq:actor-opt-problem} is in general merely a suboptimal solution for the optimization problem \eqref{eq:HJB-ctrl} (please refer also to the discussion in the introduction).

Further, consider the following optimization problem for the critic:
\begin{equation}\label{eq:critic-opt-problem}
\begin{split}
\min_{w} \quad & J (x_T,u_T,w) \\
\mathrm{s.t.} \quad & \dot x = f + g u, 
\end{split}
\end{equation}
where 
\begin{equation}\label{eq:def-Jw}
J (x_T,u_T,w) \coloneqq \int\limits_{t-T}^t \Lambda^2 (x(\tau),u(\tau),w(t)) \mathrm{d \tau}.
\end{equation} 
The right-hand side of \eqref{eq:def-Jw} is known as the interval reinforcement form \cite{Lewis2009-ADP} and is the integral over the squared Bellman error.
The notation $x_T$ indicates that $x$ is a vector-valued function over the moving time interval $[t-T, t]$ \ie, constant $w$ is sought over the time interval $[t-T, t]$.

The considered problem within this work is now stated as follows.
Find update rules for the optimization problem \eqref{eq:actor-opt-problem},
\begin{equation}\label{eq:update-actor-proto}
\dot u = \vartheta_u(x,u,w), \quad u(0) \in \mathcal{U}
\end{equation}
and for the optimization problem \eqref{eq:critic-opt-problem},
\begin{equation}\label{eq:update-critic-proto}
\dot w = \vartheta_w (x_T,u_T,w)
\end{equation}
such that $\lim\limits_{t \to \infty} \Vert u(t) - u^*(t) \Vert \leq \varepsilon_1$ and $\lim\limits_{t \to \infty} \Vert w(t) - w^*(t) \Vert \leq \varepsilon_2$, where
\begin{equation}\label{eq:actor-opt-problem-arg}
\begin{split}
u^*(t) \coloneqq \argmin_{u} \quad & (\Lambda  + \mu B)\\
\mathrm{s.t.} \quad & \dot x = f + g u \\
\end{split}
\end{equation}
and
\begin{equation}\label{eq:critic-opt-problem-arg}
\begin{split}
w^*(t)\coloneqq \argmin_{w} \quad & J  \\
\mathrm{s.t.} \quad & \dot x = f + g u.
\end{split}
\end{equation}

The next section introduces the update rules for the proposed optimization problems.
Further, these optimization problems as well as the update rules will be analyzed with respect to convexity, stability of the closed-loop system, feasibility as well as convergence of the optimization variables to vicinities of the optimal parameters.


\section{Update rules and analysis}\label{sec:analysis}

This section is concerned with the formulation and analysis of the update rules as well as the analysis of the proposed optimization problems.

\subsection{Convexity analysis}\label{sec:convex}

It is started with the convexity analysis of the critic optimization problem \eqref{eq:critic-opt-problem}.

\begin{assumption}\label{ass:critic-pe}
The matrix $\dot \varphi(x) \dot \varphi^\top(x)$ is $T$-persistently excited at level $\delta_1$.
\end{assumption}
\begin{lemma}
Let Assumption \ref{ass:critic-pe} hold.
The cost function $J$ of the optimization problem \eqref{eq:critic-opt-problem} is strongly convex with parameter $\delta_1$.
\end{lemma}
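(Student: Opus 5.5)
Since $\Lambda(x,u,w) = w^\top \dot\varphi + r$ is affine in $w$, $J$ is a quadratic function of $w$. The plan is to compute the Hessian of $J$ with respect to $w$ explicitly and bound it from below using Assumption \ref{ass:critic-pe}. Here $x(\tau)$ and $u(\tau)$ on $[t-T,t]$ are given trajectories, so $\dot\varphi(x(\tau)) = \varphi_x^\top(x(\tau))(f+gu(\tau))$ does not depend on $w$. Thus $J$ is a twice continuously differentiable function of $w$ for each fixed $t$.

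First, I would differentiate under the integral sign, which is justified because the integrand is polynomial (quadratic) in $w$ with continuous coefficients on a compact interval. This gives
\begin{equation*}
J_w = 2\int\limits_{t-T}^t \Lambda(x(\tau),u(\tau),w)\,\dot\varphi(x(\tau))\,\mathrm{d}\tau,
\end{equation*}
and, since $\Lambda_w = \dot\varphi$,
\begin{equation*}
J_{ww} = 2\int\limits_{t-T}^t \dot\varphi(x(\tau))\dot\varphi^\top(x(\tau))\,\mathrm{d}\tau .
\end{equation*}
The Hessian is independent of $w$, so the bound is automatically uniform over the domain $\R^d$.

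Second, I would invoke Definition \ref{def:pe} together with Assumption \ref{ass:critic-pe}. For all $t$, the matrix $\int_{t-T}^t \dot\varphi\dot\varphi^\top \mathrm{d}\tau - \delta_1 I$ is positive semidefinite. Therefore $J_{ww} - 2\delta_1 I \succeq 0$, and hence $J_{ww} - \delta_1 I = (J_{ww}-2\delta_1 I) + \delta_1 I \succeq 0$, because $\delta_1>0$. By Definition \ref{def:strong-convex}, $J$ is strongly convex with parameter $\delta_1$; the constant $2\delta_1$ is in fact valid.

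There is no real obstacle. The one point needing care is to state explicitly that the trajectory data $x_T, u_T$ are treated as fixed when taking derivatives in $w$. This matches the interval reinforcement form, in which a constant $w$ is sought over the window. With that convention, $\dot\varphi$ carries no $w$-dependence and the Hessian computation above is exact.
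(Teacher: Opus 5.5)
Your proof is correct and follows the same route as the paper: you expand $J$ as a quadratic $w^\top H w + w^\top\rho + \nu$ in $w$ with $H=\int_{t-T}^t \dot\varphi\dot\varphi^\top \mathrm{d}\tau$, then read off strong convexity from the persistence-of-excitation bound $H \succeq \delta_1 I$. Your Hessian $J_{ww}=2H$ is in fact the correct one, whereas the paper states it is ``simply $H$''; as you note, this only improves the constant, so the claimed parameter $\delta_1$ holds either way.
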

\proof
Using \eqref{eq:utility_fcn} and \eqref{eq:def-Ju}, the cost function $J$ from \eqref{eq:def-Jw} can be expressed as
\begin{equation}\label{eq:def-Jw-QP}
J = w^\top H(x_T,u_T) w + w^\top \rho(x_T, u_T) + \nu(x_T, u_T),
\end{equation}
where
\begin{align}
H(x_T,u_T) &=  \int\limits_{t-T}^t  \dot \varphi(x) \dot \varphi^\top(x) \mathrm{d\tau},\\
\rho(x_T, u_T) &= \int\limits_{t-T}^t  2 \dot \varphi(x) r(x,u) \mathrm{d\tau} ,\\
\nu(x_T, u_T) &=  \int\limits_{t-T}^t  r^2(x,u)\mathrm{d\tau}.
\end{align}

The Hessian of the cost function \eqref{eq:def-Jw-QP} is simply $H$ \ie, under assumption \ref{ass:critic-pe}, the cost function \eqref{eq:def-Jw-QP} is strongly convex with parameter $\delta_1$.
\endproof
The critic optimization problem is unconstrained since the dynamic can be put directly into $J$, hence feasibility is not an issue.

\begin{lemma}
	\label{lem:actor-convexity}
Consider the system \eqref{eq:system}, the utility function \eqref{eq:utility_fcn} and the optimization problem \eqref{eq:actor-opt-problem}.
Let Assumption \ref{ass:lyap} hold.
The cost function $\bar \Lambda$ defined in \eqref{eq:def-actor-cost-barrier} is strongly convex with parameter $m_1$.
\end{lemma}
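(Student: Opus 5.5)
The plan is to write $\bar\Lambda$ explicitly as a function of $u$ for fixed $x$, $w$, $t$:
\[
\bar\Lambda(u) = \Lambda(x,u,w) + \mu(t) B\bigl(z(x,u)-\gamma\bigr).
\]
Then I decompose its Hessian with respect to $u$ into a strongly convex part coming from the utility and a positive semidefinite part coming from the barrier. All other contributions vanish because they are affine in $u$.

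\textbf{Step 1 (the Bellman-error part).} Using \eqref{eq:def-Ju} together with the dynamics $\dot x = f + g u$, we have $w^\top \dot\varphi = w^\top \varphi_x (f + g u)$, which is affine in $u$. The state cost $q(x)$ does not depend on $u$. Hence
\[
\Lambda_{uu} = p_{uu}(u),
\]
and by the assumption on $p$ in \eqref{eq:utility_fcn}, $p_{uu} - m_1 I$ is positive semidefinite.

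\textbf{Step 2 (the barrier part).} By \eqref{eq:lyap-constraint}, $z(x,u) - \gamma = V_x^\top f + V_x^\top g u + k - \gamma$ is affine in $u$, with gradient $z_u = g^\top V_x$ that is independent of $u$. The chain rule gives
\[
\frac{\partial^2}{\partial u^2} B(z-\gamma) = B''(z-\gamma)\, g^\top V_x V_x^\top g .
\]
$B$ is convex and twice continuously differentiable by Definition \ref{def:barrier-fcn}, so $B'' \ge 0$ on the region where $z - \gamma < 0$, where $\bar\Lambda$ is finite. The matrix $g^\top V_x V_x^\top g$ is a rank-one positive semidefinite outer product, and $\mu(t) > 0$. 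So this whole term is positive semidefinite.

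\textbf{Step 3 (combine).} Adding the two contributions,
\[
\bar\Lambda_{uu} - m_1 I = \bigl(p_{uu} - m_1 I\bigr) + \mu B''\, g^\top V_x V_x^\top g ,
\]
which is positive semidefinite as a sum of two positive semidefinite matrices. By Definition \ref{def:strong-convex}, $\bar\Lambda$ is strongly convex with parameter $m_1$.

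The main subtlety is the domain. Strong convexity in Definition \ref{def:strong-convex} requires a domain $\mathcal D$, and the barrier is only finite where $z - \gamma < 0$. Since $z$ is affine in $u$, this sublevel set is an open half-space (or all of $\R^m$ when $g^\top V_x = 0$), hence convex. I would take this set as $\mathcal D$. It is nonempty because $u = v(x)$ gives $z \le 0 < \gamma$ by Assumption \ref{ass:lyap}, and that is the only place the assumption is used.
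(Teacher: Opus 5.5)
Your proof is correct and follows the same route as the paper, which likewise splits $\bar\Lambda$ into the strongly convex contribution of $p$ and the convex term $\mu B(z-\gamma)$ with affine argument, then uses that convex plus strongly convex is strongly convex. You carry this out at the Hessian level, matching Definition~\ref{def:strong-convex} directly, and you make explicit two points the paper leaves implicit: that $w^\top\dot\varphi$ is affine in $u$ once the dynamics are substituted, and that the natural domain $\{u : z(x,u) < \gamma\}$ is a nonempty convex set.
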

\proof
First, observe that the inequality constraint $z \leq \gamma$ from \eqref{eq:lyap-constraint} is affine in $u$.
The cost function $\Lambda$ is strongly convex in $u$ since $p$ is strongly convex in $u$ with parameter $m_1$.
The barrier function $B$ is convex by definition and has a affine function as argument, hence $B(z-\gamma)$ is convex in $u$ for all $x \in \R^n$.
Further, the sum of a convex and strongly convex function is again strongly convex \ie, the cost function $\bar \Lambda$ of the considered optimization problem \eqref{eq:actor-opt-problem}  is strongly convex with parameter $m_1$.
\endproof

\subsection{Stability analysis}
\label{sec:Stab-anal}
\begin{theorem}
Consider the system \eqref{eq:system} and let the initial condition be $x(0)=x_0$.
Let Assumption \ref{ass:lyap} hold.
For any continuously differentiable function $u: \R_0^+ \to \mathcal{U}$ that satisfies
\begin{equation}\label{eq:lyap-lem}
\dot V = V_x^\top ( f + g u) \leq - k + \gamma, \quad \forall t \in \R_0^+, x(0)=x_0,
\end{equation}
along the trajectory, the state $x$ converges to the set $\mathcal{G} \coloneqq \{x \in \R^n : k(x) \leq \gamma\}$.
\end{theorem}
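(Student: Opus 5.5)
The plan is a Lyapunov-type argument that uses only \eqref{eq:lyap-lem}, organized around the sign of $k(x)-\gamma$. Convergence to $\mathcal{G}$ will be read as $\mathrm{dist}(x(t),\mathcal{G})\to 0$, with $\mathcal{G}$ closed by continuity of $k$.

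\textbf{Step 1: monotonicity outside $\mathcal{G}$.} Rewrite \eqref{eq:lyap-lem} as $\dot V \le -(k(x)-\gamma)$. Whenever $x(t)\notin\mathcal{G}$, the right-hand side is strictly negative, so $V(x(t))$ strictly decreases on every time interval that the trajectory spends outside $\mathcal{G}$. Integrating gives, for any interval $[t_1,t_2]$ on which $x\notin\mathcal{G}$,
\begin{equation*}
\int_{t_1}^{t_2}\bigl(k(x(\tau))-\gamma\bigr)\,\mathrm{d}\tau \le V(x(t_1))-V(x(t_2)) \le V(x(t_1)),
\end{equation*}
using $V\ge 0$.

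\textbf{Step 2: the trajectory never enters $\mathcal{G}$.} Then Step 1 applies on $[0,\infty)$. Hence $V(x(t))\le V(x_0)$, and radial unboundedness of $V$ confines $x$ to the compact sublevel set $\Omega_0=\{V\le V(x_0)\}$. Also $\int_0^\infty (k(x)-\gamma)\,\mathrm{d}\tau<\infty$ with a nonnegative integrand. On $\Omega_0$, the field $f+gu$ is bounded, taking $u$ bounded along the trajectory, which is where the continuity of $u$ enters. Therefore $x$ is uniformly continuous in $t$, and so is $k(x(t))-\gamma$ since $k\in\mathcal{C}^1$. Barbalat's lemma then yields $k(x(t))\to\gamma$. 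Compactness of $\Omega_0$ and continuity of $k$ give $\mathrm{dist}(x(t),\mathcal{G})\to 0$. To see this, suppose instead that $\mathrm{dist}\ge\varepsilon$ along a sequence; extract a convergent subsequence whose limit $\bar x$ satisfies $k(\bar x)=\gamma$, so $\bar x\in\mathcal{G}$, a contradiction.

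\textbf{Step 3: the trajectory enters $\mathcal{G}$ at some time.} Then one must control the excursions out of $\mathcal{G}$. Fix $\varepsilon>0$ and set $\mathcal{G}_\varepsilon=\{\mathrm{dist}(x,\mathcal{G})<\varepsilon\}$. Let $\Omega=\{V\le c\}$ with $c=\max\{V(x_0),\sup_{\mathcal{G}}V\}$, which I take to be finite. Each excursion starts on $\partial\mathcal{G}$, where $V\le c$, and $V$ decreases during it, so the whole trajectory stays in the compact set $\Omega$. On the compact set $\Omega\setminus\mathcal{G}_\varepsilon$, continuity of $k$ gives a margin $\eta>0$ with $k\ge\gamma+\eta$. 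Thus $\dot V\le-\eta$ there.

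\textbf{Main obstacle: bounding the bad excursions.} I must rule out infinitely many excursions that reach $\Omega\setminus\mathcal{G}_\varepsilon$.
\begin{itemize}
\item Each such excursion has to cross the shell $\mathcal{G}_\varepsilon\setminus\mathcal{G}_{\varepsilon/2}$.
\item By the uniform speed bound on $\Omega$, each crossing takes time at least some $\tau_0>0$.
\item Within a single excursion, $V$ therefore drops by at least $\eta'\tau_0$, where $\eta'$ is the margin on $\Omega\setminus\mathcal{G}_{\varepsilon/2}$.
\end{itemize}
The difficulty is that $V$ can increase again while the trajectory is inside $\mathcal{G}$, so these drops do not accumulate across excursions. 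I would instead argue per excursion. An excursion starting on $\partial\mathcal{G}$ with value $V\le\sup_{\partial\mathcal{G}}V$ can only reach points whose $V$-value is strictly below its starting value. I would then try to show that the set of points reachable in this way, intersected with the complement of $\mathcal{G}_\varepsilon$, is empty for large times. This would use a comparison of $V$ with the distance to $\mathcal{G}$ near $\partial\mathcal{G}$, and that comparison is where I expect the real work, and possibly additional regularity of $V$ and $k$ near $\partial\mathcal{G}$, to be needed.

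Once the excursions are handled, combining Steps 2 and 3 gives $\limsup_{t\to\infty}\mathrm{dist}(x(t),\mathcal{G})\le\varepsilon$ for every $\varepsilon>0$, which is the claim.
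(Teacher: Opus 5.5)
\textbf{Verdict.} There is a genuine gap, and it is the one you flag in Step~3. It cannot be closed, because the statement is false when convergence is read as $\mathrm{dist}(x(t),\mathcal{G})\to 0$, even for quadratic $V$ and $k$.

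\textbf{Counterexample.} Take $n=m=2$, $f\equiv 0$, $g\equiv I$, $V=\tfrac12\Vert x\Vert^2$, $v(x)=-x$, $k(x)=x_1^2/100+x_2^2\le\Vert x\Vert^2$ and $\gamma=1$. Assumption~\ref{ass:lyap} holds, and $\mathcal{G}$ is the ellipse with semi-axes $10$ and $1$. In polar coordinates the constraint reads $r\dot r\le 1-k(x)$. Since $1-k\ge 1-r^2\ge -10r$ for $r\le 10$, every motion with $\dot r=-10$ and $r\le 10$ is admissible, whatever $\dot\theta$ does. Now build a loop:
\begin{enumerate}
\item Start at rest at $(9.9,0)\in\mathcal{G}$ and let $r$ decrease at rate $10$ while $\theta$ sweeps from $0$ to $\pi/2$ within $0.01$ time units. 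This reaches about $(0,9.8)$, at distance about $8.8$ from $\mathcal{G}$.
\item Continue radially into $\mathcal{G}$ down to $(0,0.5)$.
\item Creep back to $(9.9,0)$ inside $\mathcal{G}$ along the axes, e.g.\ with $x_1\dot x_1\le 0.0198<1-x_1^2/100$.
\end{enumerate}
All inequalities hold with slack. The velocity vanishes only at interior points of $\mathcal{G}$, where $1-k>0$, and the only speed change outside $\mathcal{G}$ (switching $\dot\theta$ off near the $x_2$-axis) happens at constant $\dot r=-10$. So the loop can be smoothed. Repeating it gives a smooth, bounded, periodic $u=\dot x$ with $\dot V\le -k+\gamma$ for all $t$, yet $x(t)$ returns to distance about $8.8$ from $\mathcal{G}$ in every period.

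This is exactly your excursion mechanism. The trajectory leaves near the far tip, where $V$ is close to $\max_{\mathcal{G}}V$, and travels through $\{V<\max_{\mathcal{G}}V\}\setminus\mathcal{G}$. No regularity of $V$ or $k$ near $\partial\mathcal{G}$ prevents this. The paper's LaSalle argument does not rescue the claim either. The invariance principle needs an autonomous system and $\dot V\le 0$ on a positively invariant compact set. Here $u$ depends on $t$, and $\dot V$ may be positive inside $\mathcal{G}$.

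\textbf{Step~2.} Continuity of $u$ on $[0,\infty)$ does not give boundedness. The uniform continuity you need for Barbalat's lemma is therefore an extra hypothesis. Without it, excursions of summable duration but unbounded speed defeat even the conclusion of Step~2.

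\textbf{What can be proved.} Your Step~3 already half contains the correct result: ultimate boundedness in $\{V\le c^*\}$ with $c^*=\max_{\mathcal{G}}V$. This needs $\mathcal{G}$ bounded, which positive definiteness of $k$ alone does not imply. The argument has two cases.
\begin{itemize}
\item If $x$ visits $\mathcal{G}$, take any later $t$ and the last time $s\le t$ with $x(s)\in\mathcal{G}$. Then $V(x(t))\le V(x(s))\le c^*$.
\item If $\mathcal{G}$ is never visited, $V(x(t))$ decreases to some $V_\infty$. The case $V_\infty>c^*$ is impossible: on the compact set $\{V_\infty\le V\le V(x_0)\}$, which misses $\mathcal{G}$, one has $\dot V\le-\eta$ for some $\eta>0$.
\end{itemize}
This requires no bound on $u$, and the example above shows that $\{V\le c^*\}$ cannot be replaced by $\mathcal{G}$.
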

\proof
	First, notice that $k$ is a positive definite function, hence the set $\mathcal{G} \subset \R^n$ is compact.
	Define, further the set $E\coloneqq \{x \in \R^n : \dot V =0\}$.
	Since for all $x \in \R^n \setminus \mathcal{G}$, one has $\dot V < 0$, it follows that $E \subseteq \mathcal{G}$.
	By Lasalles invariance priciple \cite[Theorem 4.4]{khalil2002-nonlinear} $x$ converges to the largest invariant set in $E$, hence it converges to the set $\mathcal{G}$.
\endproof

\subsection{Update rules}\label{sec:update-rules}

As already mentioned in Section \ref{sec:intro}, asymptotic convergence to the optimal parameters of a time-varying strongly convex optimization problem, in general, requires a tracking controller for the internal dynamic.
Such a tracking controller in turn may need to use the inverse Hessian of the cost function, which might be expensive in practice \cite{2016-fazlyap-ip}.
Here, it is suggested to use the gradient descent with specially tuned gains and without using the inverse Hessian.
It should be pointed out that various other optimization routines are possible, in particular from the regret analysis.
This section considers a particular one, which is sufficient for the purposes of the current study.

First, define
\begin{equation}\label{eq:def-actor-cost-barrier}
\bar \Lambda(x,u,w,t) \coloneqq \Lambda  + \mu B.
\end{equation}
The update rule \eqref{eq:update-actor-proto} for the actor optimization problem \eqref{eq:actor-opt-problem} is now defined as
\begin{equation}\label{eq:update-rule-actor}
\dot u \coloneqq - \alpha(t)  \bar \Lambda_u, \quad u(0)= v(x(0)).
\end{equation}
The update rule \eqref{eq:update-critic-proto} for the critic optimization problem \eqref{eq:critic-opt-problem} is defined as
\begin{equation}\label{eq:update-rule-critic}
\dot w \coloneqq - \beta(t) J_w.
\end{equation}

\subsection{Feasibility analysis}

In this section, feasibility of the optimization problem \eqref{eq:actor-opt-problem} is analyzed.
It is shown that $u$ remains feasible along the trajectory of the system \eqref{eq:system}.
Consider the following
\begin{lemma}\label{lem:recursive-feasibility}
Consider the system \eqref{eq:system}, the optimization problem \eqref{eq:actor-opt-problem} and the update rule \eqref{eq:update-rule-actor}.
Let Assumption \ref{ass:lyap} hold.
For every initial condition $x \in \R^n$, the solution $u(t)$ to \eqref{eq:update-rule-actor} satisfies $u(t) \in \mathcal{H}(x(t))$ for all $ t\geq 0$, if $u(0) \in \mathcal{H}(x(0))$, where $\mathcal{H}(x)\coloneqq\{u \in \R^m : z(x,u)\leq \gamma\}$.
\end{lemma}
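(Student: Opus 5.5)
We argue by contradiction, using the barrier term in $\bar\Lambda$ as a blow-up mechanism: a trajectory that approaches the boundary $z=\gamma$ would make the barrier, and hence the cost, unbounded, which a gradient flow cannot produce.

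First, the initial condition is strictly feasible. With $u(0)=v(x(0))$, Assumption \ref{ass:lyap} gives $z(x(0),u(0)) = V_x^\top(f+gv)+k \le 0 < \gamma$. So $u(0)$ lies in the interior of $\mathcal{H}(x(0))$ and $B(z-\gamma)$ is finite at $t=0$.

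Next, consider the scalar signal $s(t) \coloneqq z(x(t),u(t)) - \gamma$, which is continuous in $t$ along solutions. Suppose it reaches zero, and let $t_1$ be the first time with $s(t_1)=0$, so that $s<0$ on $[0,t_1)$. As $t\to t_1^-$ we have $B(s(t))\to\infty$, and since $\mu(t_1)>0$, also $\mu B \to \infty$.

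To rule this out, study $\frac{d}{dt}\bar\Lambda(x(t),u(t),w(t),t)$ on $[0,t_1)$:
\begin{equation*}
\frac{d}{dt}\bar\Lambda = \bar\Lambda_u^\top \dot u + \bar\Lambda_x^\top \dot x + \bar\Lambda_w^\top \dot w + \dot\mu B .
\end{equation*}
The first term equals $-\alpha\Vert\bar\Lambda_u\Vert^2 \le 0$ by \eqref{eq:update-rule-actor}. The last term is $\dot\mu B$; since $\dot\mu<0$ and $B$ is large and positive near the boundary, it is nonpositive there. The explicit $w$-dependence enters only through $w^\top\dot\varphi$, which is bounded on compact time intervals because $w$, $x$ and $u$ remain bounded before $t_1$.

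The main obstacle is the term $\bar\Lambda_x^\top \dot x$, which contains $\mu B'(s)\, z_x^\top \dot x$ and can also blow up. My plan is to compare the explicit blow-up rates.
\begin{itemize}
\item Near the boundary, $\bar\Lambda_u \approx \mu B'(s) g^\top V_x$, so $-\alpha\Vert\bar\Lambda_u\Vert^2 \sim -\alpha\mu^2 B'(s)^2\Vert g^\top V_x\Vert^2$.
\item This is quadratic in $B'$, while the $x$-term is only linear in $B'$. Hence the former dominates whenever $g^\top V_x\neq 0$ and $\alpha$ is large enough.
\end{itemize}
It follows that $\frac{d}{dt}\bar\Lambda$ is bounded above on $[0,t_1)$, so $\bar\Lambda$ stays bounded there, contradicting $\mu B\to\infty$.

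Note that $g^\top V_x\neq0$ can be taken as given near the boundary: if $g^\top V_x=0$, then $z$ is independent of $u$, and $z = V_x^\top f + k \le 0$ by Assumption \ref{ass:lyap}, so the constraint cannot be active. The remaining care is to make the rate comparison uniform. Equivalently, one can argue directly on $\dot s = z_x^\top\dot x + z_u^\top\dot u$, where $z_u^\top\dot u = -\alpha\, (g^\top V_x)^\top \bar\Lambda_u \approx -\alpha\mu B'(s)\Vert g^\top V_x\Vert^2 \to -\infty$. This forces $\dot s<0$ near $s=0$, so $s$ cannot reach $0$, which gives the claim $u(t)\in\mathcal{H}(x(t))$ for all $t\ge0$.
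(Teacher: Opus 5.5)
Your proof is correct, and it is more complete than the paper's.

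\textbf{The paper's argument.} The paper freezes $x$. It shows that as $u$ approaches a point $u_\partial$ with $z(x,u_\partial)=\gamma$, the normalized normal component $-B_u^\top\dot u/(B_u^\top B_u)$ tends to $\alpha\mu>0$, i.e.\ the flow of $u$ alone points into $\mathcal{H}(x)$. It ignores that the constraint set moves with $x(t)$. It also divides by $z_u^\top z_u$ without checking that $z_u=g^\top V_x\neq 0$.

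\textbf{Your second route} is the same core computation, $z_u^\top\dot u\approx-\alpha\mu B'(s)\Vert g^\top V_x\Vert^2\to-\infty$, but placed inside $\dot s=z_x^\top\dot x+z_u^\top\dot u$. The drift of the boundary therefore enters as a bounded term. Your remark that $g^\top V_x=0$ forces $z\le 0$ by Assumption \ref{ass:lyap} does two things. It supplies the nondegeneracy that the paper's division tacitly assumes. Via compactness of the trajectory segment on $[0,t_1]$ where $s\ge-\gamma/2$, it also gives the uniform lower bound on $\Vert g^\top V_x\Vert$ that your rate comparison needs.

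\textbf{Your first route} shows that $\frac{d}{dt}\bar\Lambda$ is bounded above, because $-\alpha\Vert\bar\Lambda_u\Vert^2$ is a downward quadratic in $B'$ while $\bar\Lambda_x^\top\dot x$ is only linear in $B'$. This is a genuinely different, energy-type argument of the kind used for barrier gradient flows. It requires only that $\alpha$ be bounded away from zero near $t_1$, not ``large enough'': such a quadratic is bounded above for any fixed positive leading coefficient.

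\textbf{Shared requirements.} Both of your routes use boundedness of $(x,u,w)$ and of $\beta$ on $[0,t_1)$. This follows from continuity of the trajectory up to $t_1$, which your setup presupposes. It is an existence issue the paper does not address either. Your use of $u(0)=v(x(0))$ to start strictly inside $\mathcal{H}(x(0))$ is genuinely needed. The lemma's hypothesis $z\le\gamma$ would allow a start on the boundary, where $B$ is infinite and the update rule is undefined.
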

\proof
First fix some $x \in \R^n$.
Notice that under Assumption \ref{ass:lyap} there exists a feasible $u$ for all $x \in \R^n$.
Since $u$ starts feasible, it has to be shown that $u$ does not cross the hyperplane defined by $z(x,u)=\gamma$ from the feasible to the infeasible area.
Define the set of boundary points as $\partial\mathcal{H}(x) \coloneqq \{u \in \R^m : z(x,u) = \gamma \}$.
It is shown that if $u$ approaches $u_\partial$ where $u_\partial \in \partial\mathcal{H}(x)$, $\dot u$ is such that $u$ moves away from the hyperplane defined by $\partial\mathcal{H}(x)$.
This can be formalized as follows:
\begin{equation}
	\forall u_{\partial} \in \partial \mathcal H(x) \lim_{u \to u_{\partial}} - \frac{B_{u}^\top \dot u}{B_{u}^\top B_u}  > 0.
\end{equation}
A direct calculation yields
\begin{equation}
\begin{split}
&\lim_{u \to u_{\partial}} - \frac{B_{u}^\top \dot u}{B_{u}^\top B_u} =\alpha \lim_{u \to u_{\partial}} \left( \frac{B_{u}^\top \Lambda_u}{B_{u}^\top B_u} + \mu \underbrace{\frac{B_{u}^\top B_{u}}{B_{u}^\top B_u}}_{=1} \right) \\
&=\alpha \left(\mu + \lim_{u \to u_{\partial}}  \frac{B_z z_{u}^\top \Lambda_u}{ B_z^2 z_{u}^\top z_u} \right) = \alpha \left(\mu + \lim_{u \to u_{\partial}}  \frac{z_{u}^\top \Lambda_u}{ B_z z_{u}^\top z_u} \right).
\end{split}
\end{equation}
Since $z$ is linear in $u$, $z_u$ is a constant vector.
The cost function $\Lambda$ is globally defined and twice continuously differentiable, hence
\begin{equation}\label{eq:limit-cost-fcn}
\lim\limits_{u \to u_\partial} \Lambda_u = \Lambda_u (x,u_\partial , w) < \infty.
\end{equation}
Furthermore, the gradient of the barrier function $B$ tends to infinity at the boundary \ie
\begin{equation}\label{eq:limit-barrier-gradient}
\lim\limits_{u \to u_\partial} B_z \to \infty.
\end{equation}
From \eqref{eq:limit-cost-fcn} and \eqref{eq:limit-barrier-gradient}, it follows that
\begin{equation}
\alpha \left(\mu + \underbrace{\lim_{u \to u_{\partial}} \frac{z_{u}^\top \Lambda_u}{ B_z z_{u}^\top z_u}}_{=0} \right)= \alpha \mu >0,
\end{equation}
which completes the proof.
\endproof
\begin{remark}
Lemma \ref{lem:recursive-feasibility} can be interpreted as a kind of continuous time equivalence of recursive feasibility known from discrete time optimal control problems \cite{lofberg2012oops}.
\end{remark}

The next subsection analyzes the convergence to prescribed vicinities of the optima for the optimization problems \eqref{eq:actor-opt-problem} and \eqref{eq:critic-opt-problem}.

\subsection{Convergence analysis}\label{sec:conv-anal}

The following theorem shows that under properly chosen gains of the update rules, the actor and critic parameters converge to prescribed vicinities of the respective optima.

\begin{theorem}\label{thm:convergence}
Consider the system \eqref{eq:system}, the optimization problem \eqref{eq:actor-opt-problem} together with the update rule \eqref{eq:update-rule-actor} and the optimization problem \eqref{eq:critic-opt-problem} together with the update rule \eqref{eq:update-rule-critic}.
Let Assumption \ref{ass:lyap} and \ref{ass:critic-pe} hold.
There exists gains $\alpha$ and $\beta$ such that $\lim\limits_{t \to \infty} \Vert  u(t) -u^*(t) \Vert \leq \varepsilon_1$ and $\lim\limits_{t \to \infty} \Vert  w(t)-w^*(t) \Vert \leq \varepsilon_2$ holds.
\end{theorem}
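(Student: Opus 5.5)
The proof treats the actor and the critic separately. For each, we take the squared distance to the time-varying optimum as a Lyapunov-like function and use strong convexity, via Lemma \ref{lem:actor-convexity} and the critic convexity lemma, to dominate the drift of the optimum. For the actor, set $e_u \coloneqq u - u^*$ and $W_u \coloneqq \tfrac12 \Vert e_u\Vert^2$. Along solutions,
\begin{equation*}
\dot W_u = e_u^\top(\dot u - \dot u^*) = -\alpha\, e_u^\top \bar\Lambda_u(x,u,w,t) - e_u^\top \dot u^* .
\end{equation*}
Since $\bar\Lambda_u(x,u^*,w,t)=0$ by optimality, strong convexity with parameter $m_1$ gives $e_u^\top \bar\Lambda_u \geq m_1 \Vert e_u\Vert^2$. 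Hence
\begin{equation*}
\dot W_u \leq -2\alpha m_1 W_u + \Vert \dot u^*\Vert \sqrt{2W_u}.
\end{equation*}
Whenever $\Vert e_u\Vert > \Vert\dot u^*\Vert/(\alpha m_1)$, we get $\dot W_u<0$. So if the gain is chosen as $\alpha(t) \geq \sup \Vert \dot u^*(t)\Vert/(m_1 \varepsilon_1)$, or more simply as a time-varying gain with $\alpha(t) m_1 \varepsilon_1 > \Vert \dot u^*(t)\Vert + c$ for some $c>0$, then $e_u$ enters the ball of radius $\varepsilon_1$ in finite time and stays there. This gives $\limsup \Vert u-u^*\Vert\leq\varepsilon_1$. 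Lemma \ref{lem:recursive-feasibility} guarantees that $u$ stays in the interior of $\mathcal H(x)$, so $\bar\Lambda$ is finite and differentiable along the trajectory, and the argument is legitimate.

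The critic is handled the same way. Set $e_w\coloneqq w-w^*$. By \eqref{eq:def-Jw-QP}, $J_w = 2Hw+\rho$, $w^* = -\tfrac12 H^{-1}\rho$, and $J_w = 2He_w$. Therefore
\begin{equation*}
\tfrac{d}{dt}\tfrac12\Vert e_w\Vert^2 \leq -2\beta\delta_1\Vert e_w\Vert^2 + \Vert \dot w^*\Vert\,\Vert e_w\Vert ,
\end{equation*}
using Assumption \ref{ass:critic-pe}. Choosing $\beta(t)\, 2\delta_1\varepsilon_2 > \Vert \dot w^*(t)\Vert + c$ yields the second claim.

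The main obstacle is making the gain choice legitimate, that is, bounding $\Vert\dot u^*\Vert$ and $\Vert \dot w^*\Vert$ in terms of quantities that are available or bounded.
\begin{itemize}
\item \emph{Critic drift.} Differentiating, $\dot w^* = -\tfrac12 H^{-1}(\dot\rho - \dot H H^{-1}\rho)$, with $\Vert H^{-1}\Vert\leq 1/\delta_1$. The terms $\dot H$ and $\dot\rho$ equal boundary evaluations of the integrands at $t$ and $t-T$, so they are bounded once $x$ and $u$ are bounded.
\item \emph{Boundedness of $x$.} This follows from the stability theorem: the constraint $z\leq\gamma$ is exactly \eqref{eq:lyap-lem}, so $x$ converges to the compact set $\mathcal G$, and radial unboundedness of $V$ keeps $x$ in a sublevel set.
\item \emph{Actor drift.} By the implicit function theorem applied to $\bar\Lambda_u=0$, $\dot u^* = -\bar\Lambda_{uu}^{-1}(\bar\Lambda_{ux}\dot x + \bar\Lambda_{uw}\dot w + \bar\Lambda_{ut})$, with $\Vert\bar\Lambda_{uu}^{-1}\Vert\leq 1/m_1$. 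The remaining terms are continuous; $\bar\Lambda_{ut}=\dot\mu B_u$ may grow near the barrier boundary, but $u^*$ lies strictly inside the feasible set since $B\to\infty$ there, so on the compact set where $x$ lives these terms are bounded, or at least bounded along the trajectory.
\item \emph{Closing the coupling.} Since $\dot w$ enters $\dot u^*$, we first fix $\beta$, which bounds $\dot w$ through $\Vert J_w\Vert$ and the boundedness of $e_w$, and then choose $\alpha$.
\end{itemize}
If uniform constants are hard to extract, the fallback is to allow $\alpha,\beta$ to depend on the current state, for example $\alpha(t) = (\bar c + \bar\kappa(x,u,w))/(m_1\varepsilon_1)$ with $\bar\kappa$ a computable bound on $\Vert\dot u^*\Vert$. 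This suffices for existence, which is all the theorem claims.
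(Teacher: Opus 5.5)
Your argument is correct, modulo the global existence of closed-loop solutions that the paper also takes for granted. It takes a genuinely different route.

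\textbf{Your route versus the paper's.} You use the distances to the moving optima, $\tfrac12\Vert u-u^*\Vert^2$ and $\tfrac12\Vert w-w^*\Vert^2$, as two decoupled Lyapunov functions. You then dominate the drifts $\Vert\dot u^*\Vert$ and $\Vert\dot w^*\Vert$, obtained from the implicit function theorem and from $w^*=-\tfrac12H^{-1}\rho$, by a gain linear in the drift plus a margin $c$. The paper instead uses the gradient norm $\tfrac12\Vert\bar\Lambda_u\Vert^2+\tfrac12\Vert J_w\Vert^2$. There the perturbation appears as $a\coloneqq\bar\Lambda_u^\top(\bar\Lambda_{ux}\dot x+\bar\Lambda_{uw}\dot w+\bar\Lambda_{ut})$ evaluated at the \emph{current} iterate. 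The paper sets $\alpha=\tfrac{4}{m_1^3\varepsilon_1^2}a^2$ (analogously $\beta$) and merges actor and critic through the ``stripes'' set argument.

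\textbf{What the paper's route buys.} It never involves $u^*$, so its gains are computable online, which the paper stresses after the proof. Your gains need the exact minimizer, and you do not construct $\bar\kappa$. So you obtain existence only, which is all the statement asks.

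\textbf{A caveat on your boundedness bullet.} Your claim that the terms are bounded ``on the compact set where $x$ lives'' is not uniform in time. If $\mu\to0$ with the constraint active, $u^*$ approaches $\partial\mathcal H(x)$. For the case-study barrier $B(s)=-s^{-1}$, $\mu B''$ at $u^*$ then grows like $\mu^{-1/2}$. Hence the estimate $\Vert\bar\Lambda_{uu}^{-1}\Vert\leq 1/m_1$ times $\Vert\bar\Lambda_{ux}\dot x+\bar\Lambda_{uw}\dot w+\bar\Lambda_{ut}\Vert$ is not uniformly bounded. It is your state-dependent fallback that actually carries the proof. That fallback is consistent: $\dot w^*$ involves only $H$, $\rho$ and $x,u$ at $t$ and $t-T$ (no $\dot u$), and $\dot u^*$ involves $\dot w=-\beta J_w$ but not $\dot u$. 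So fixing $\beta$ before $\alpha$ creates no algebraic loop.

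\textbf{What your route buys.} Your margin gives $\tfrac{d}{dt}\tfrac12\Vert e\Vert^2\leq -c\,\varepsilon_1$ (resp.\ $-c\,\varepsilon_2$) outside each ball, hence finite-time entry and forward invariance. The paper's quadratic gain only yields $\dot V_1\leq a\bigl(1-a\Vert\tilde u\Vert^2/\varepsilon_1^2\bigr)$. This bound is nonnegative whenever $0<a\leq\varepsilon_1^2/\Vert\tilde u\Vert^2$, so the paper's ``independent of the sign'' conclusion does not actually follow from it. A gain linear in $\vert a\vert$ with a margin, as in your proposal, is what closes that step.
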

\proof
First, define $\tilde u \coloneqq u -u^*$ and $\tilde w \coloneqq w -w^*$.
Consider the following positive definite function 
\begin{equation}
V(\Lambda_u, J_w)=\underbrace{\frac{1}{2} \bar \Lambda_u^\top \bar \Lambda_u }_{=: V_1(\Lambda_u)} + \underbrace{\frac{1}{2}J_w^\top J_w.}_{=:V_2(J_w)}
\end{equation}
The derivative reads as
\begin{equation}
\dot V = \underbrace{\bar \Lambda_u^\top  \dot{\bar{\Lambda}}_u}_{=\dot V_1} + \underbrace{J_w^\top \dot{J}_w}_{=\dot V_2}.
\end{equation}
First, the term $\dot V_1$ is addressed
\begin{equation}
\dot V_1 = \bar\Lambda_u^\top (\bar\Lambda_{uu}  \dot u + \bar\Lambda_{ux}  \dot x + \bar\Lambda_{uw} \dot w + \bar\Lambda_{ut}  ).
\end{equation}
Using \eqref{eq:update-rule-actor}, one can further write
\begin{equation}\label{eq:actor-lyap-bound}
\begin{split}
\dot V_1  = - \alpha \bar\Lambda_u^\top \bar\Lambda_{uu}\bar\Lambda_u + \bar\Lambda_u^\top ( \bar\Lambda_{ux}  \dot x + \bar\Lambda_{uw} \dot w + \bar\Lambda_{ut} ). \\
\end{split}
\end{equation}
Since $\bar\Lambda$ is a strongly convex function with parameter $m_1$ (see Section \ref{sec:convex}), one has
\begin{equation} \label{eq:bound-Hessian}
x^\top \Lambda_{uu} x \geq m_1 x^\top x.
\end{equation}
Further, for every strongly convex function holds that \cite{nesterov2013introductory}
\begin{equation}
\bar\Lambda^* \geq \bar\Lambda + \bar\Lambda_u^\top \underbrace{(u^* -u)}_{=- \tilde u} + \frac{m_1}{2} \Vert \tilde u \Vert^2.
\end{equation}
Rearranging yields
\begin{equation}
\bar\Lambda_u^\top \tilde u \geq \underbrace{\bar\Lambda - \bar\Lambda^*}_{>0} + \frac{m_1}{2} \Vert \tilde u \Vert^2 \geq \frac{m_1}{2} \Vert \tilde u \Vert^2 \geq 0.
\end{equation}
Now, the following chain of inequalities can be written
\begin{equation}
\frac{m_1^2}{4} \Vert \tilde u \Vert^4 \leq (\bar\Lambda_u^\top \tilde u)^2 \leq \Vert \bar\Lambda_u \Vert^2 \Vert \tilde u \Vert^2
\end{equation}
and further
\begin{equation}
\frac{m_1^2}{4} \Vert \tilde u \Vert^2 \leq  \Vert \bar\Lambda_u \Vert^2 = \bar\Lambda_u^\top \bar\Lambda_u.
\end{equation}
Multiplying by $-1$ yields an upper bound on $\bar\Lambda_u^\top \bar\Lambda_u$,
\begin{equation}
-\bar\Lambda_u^\top \bar\Lambda_u \leq -\frac{m_1^2}{4} \Vert \tilde u \Vert^2.
\end{equation}
Using this upper bound together with \eqref{eq:bound-Hessian}, one can further bound \eqref{eq:actor-lyap-bound}
\begin{equation}
\begin{split}
\dot V_1  \leq - \alpha\frac{m_1^3}{4} \Vert \tilde u \Vert^2 + \bar\Lambda_u^\top ( \bar\Lambda_{ux}  \dot x + \bar\Lambda_{uw} \dot w + \bar\Lambda_{ut}  ).\\
\end{split}
\end{equation}
Choose $\alpha$ as
\begin{equation}
\label{eq:definition-alpha}
\alpha= \frac{4}{m_1^3 \varepsilon_1^2} (\bar\Lambda_u^\top ( \bar\Lambda_{ux}  \dot x + \bar\Lambda_{uw} \dot w + \bar\Lambda_{ut}  ))^2
\end{equation}
which yields
\begin{equation}\label{eq:lyap-actor-decay}
\begin{split}
\dot V_1  \leq& \left(1 - \frac{\Vert \tilde u \Vert^2}{\varepsilon_1^2}  \bar\Lambda_u^\top ( \bar\Lambda_{ux}  \dot x + \bar\Lambda_{uw} \dot w + \bar\Lambda_{ut} ) \right)\\
&\bar\Lambda_u^\top ( \bar\Lambda_{ux}  \dot x + \bar\Lambda_{uw} \dot w + \bar\Lambda_{ut}).
\end{split}
\end{equation}
Independent of the sign of $\bar\Lambda_u^\top ( \bar\Lambda_{ux}  \dot x + \bar\Lambda_{uw} \dot w + \bar\Lambda_{ut})$, if $\Vert \tilde u \Vert$ is greater than $\varepsilon_1$ then $\dot V_1 < 0$.

Analogously, $\dot{V}_2$ can be written as
\begin{equation}
\begin{split}
\dot{V}_2 &= - 2 \beta J_w^\top H J_w + J_w^\top ( 2 \dot H w + \dot \rho) \\
& \leq - \beta \frac{\delta_1^3}{2} \Vert \tilde w \Vert^2 + J_w^\top ( 2 \dot H w + \dot \rho),
\end{split}
\end{equation}
where the last inequality uses Assumption \ref{ass:critic-pe}.
Choosing $\beta$ as
\begin{equation}
\label{eq:definition-beta}
\beta = \frac{2}{\delta_1^3 \varepsilon_2^2} (J_w^\top ( 2 \dot H w + \dot \rho))^2
\end{equation}
yields
\begin{equation}
\begin{split}
\dot{V}_2 \leq&  \left( 1 - \frac{\Vert \tilde w \Vert^2}{\varepsilon_2^2} J_w^\top ( 2 \dot H w + \dot \rho )\right)  J_w^\top ( 2 \dot H w + \dot \rho ).
\end{split}
\end{equation}
Again, independent of the sign of $J_w^\top ( 2 \dot H w + \dot \rho)$, if $\Vert \tilde w \Vert$ is greater than $\varepsilon_2$ then $\dot V_2 < 0$.

Now, define $\tilde \Lambda_u \coloneqq \Lambda_u - \Lambda_u^*$ and $\tilde J_w = J_w -J_w^*$, where $\Lambda_u^* = \Lambda_u(x,u^*,w,t)$ and, respectively, $J_w^* \coloneqq J_w(x,u,w^*)$.
Now, introduce the following sets
\begin{align}
\mathcal{D} \coloneqq& \{\Lambda_u \in \R^m, J_w \in \R^d : \notag\\
 &V(\tilde \Lambda_u, \tilde J_w) \leq V(\tilde \Lambda_{u_0}, \tilde J_{w_0})\}, \\
\mathcal{B} \coloneqq& \{ (\tilde \Lambda_u, \tilde J_w) \in \mathcal{D} :  \Vert \tilde u \Vert \leq \varepsilon_1 \land \Vert \tilde w \Vert \leq \varepsilon_2 \}, \\
\partial\mathcal{B} \coloneqq& \{ (\tilde \Lambda_u, \tilde J_w) \in \mathcal{D} :  \Vert \tilde u \Vert = \varepsilon_1 \land \Vert \tilde w \Vert = \varepsilon_2 \}, \\
\Omega \coloneqq& \{(\tilde \Lambda_u, \tilde J_w) \in \mathcal{D} : \dot V \leq 0\}.
\end{align}
Define the following ``stripes'' in $\mathcal{D}$:
\begin{align}
\mathcal{S}_u &\coloneqq  \{(\tilde \Lambda_u, \tilde J_w) \in \mathcal{D} : \Vert \tilde u \Vert \leq \varepsilon_1 \land \Vert \tilde w \Vert \geq \varepsilon_2\}, \\
\mathcal{S}_w &\coloneqq  \{(\tilde \Lambda_u, \tilde J_w) \in \mathcal{D} : \Vert \tilde u \Vert \geq \varepsilon_1 \land \Vert \tilde w \Vert \leq \varepsilon_2\}.
\end{align}
Let $\mathcal{S} \coloneqq \mathcal{B} \cup \mathcal{S}_u \cup \mathcal{S}_w$.
Further split the set $\Omega$ into
\begin{align}
\Omega_u &\coloneqq \{(\tilde \Lambda_u, \tilde J_w) \in (\mathcal{S}_u \cup \mathcal{B}) : \dot V \leq 0\}, \\
\Omega_w &\coloneqq \{(\tilde \Lambda_u, \tilde J_w) \in (\mathcal{S}_w \cup \mathcal{B}) : \dot V \leq 0\}.
\end{align}
Finally, define sets for which $\dot V, \dot V_1$ and $\dot V_2$ are equal zero:
\begin{align}
\mathcal{E} &\coloneqq \{(\tilde \Lambda_u, \tilde J_w) \in \Omega : \dot V=0\}, \\
\mathcal{E}_u &\coloneqq \{(\tilde \Lambda_u, \tilde J_w) \in \Omega_u: \dot V_1=0\}, \\
\mathcal{E}_w &\coloneqq \{(\tilde \Lambda_u, \tilde J_w) \in \Omega : \dot V_2=0\}. 
\end{align}
If $(\tilde \Lambda_u, \tilde J_w) \in \Omega$, then, by the Lyapunov theory \cite{khalil2002-nonlinear}, it follows that $\tilde \Lambda_u, \tilde J_w$ converges to the set $\mathcal{E}$.
Since $\dot V <0$ for all $(\tilde \Lambda_u, \tilde J_w) \in \Omega\setminus\mathcal{S}$ one has $\mathcal{E} \subseteq \mathcal{S}$.
Now, two different cases need to be considered.
In the first case, consider that $(\tilde \Lambda_u, \tilde J_w) \in \Omega_u$.
Based on $\dot V_1<0$, $\tilde \Lambda_u$ converges to the set $\mathcal{E}_u$ with $\mathcal{E}_u \subseteq \Omega_u \subseteq (\mathcal{S}_u \cup \mathcal{B})$.
In the second case, consider $(\tilde \Lambda_u, \tilde J_w) \in \Omega_w$.
Based on $\dot V_2<0$, $\tilde J_w$ converges to the set $\mathcal{E}_w$ with $\mathcal{E}_w \subseteq \Omega_w \subseteq (\mathcal{S}_w \cup \mathcal{B})$.

The resulting set $\mathcal{E}_B$ is the intersection between those sets defined by $\mathcal{E}_B \subseteq (\mathcal{E} \cap \mathcal{E}_u \cap \mathcal{E}_w)$.
Using the fact that $\mathcal{S}_u \cap \mathcal{S}_w = \partial\mathcal{B}$ and $\partial\mathcal{B} \cup \mathcal{B}=\mathcal{B}$ , one obtains the following relation
\begin{equation}
\begin{split}
\mathcal{E}_B &\subseteq (\mathcal{E} \cap \mathcal{E}_u \cap \mathcal{E}_w) \subseteq (\mathcal{S} \cap (\mathcal{S}_u \cup \mathcal{B}) \cap (\mathcal{S}_w \cup \mathcal{B})) \\
&\subseteq ((\mathcal{B} \cup \mathcal{S}_u \cup \mathcal{S}_w) \cap (\mathcal{S}_u \cup \mathcal{B}) \cap (\mathcal{S}_w \cup \mathcal{B})) \subseteq \mathcal{B}.
\end{split}
\end{equation}
It follows that $(\tilde \Lambda_u, \tilde J_w)$ converge to prescribed vicinities $\mathcal{B}$, which completes the proof.
\endproof

Notice that all constants and functions are known in \eqref{eq:definition-alpha} and \eqref{eq:definition-beta} \ie $\alpha$ and $\beta$ can be calculated online.
It can be further seen that the choice of the vicinities by means of $\varepsilon_1, \varepsilon_2$ influences the gains $\alpha$ and $\beta$. 
Smaller values of $\varepsilon_1, \varepsilon_2$ lead to the necessity of increasing of $\alpha$ and $\beta$.

The next section presents a case study.
\section{Case Study and Discussion}\label{sec:results}

Consider the following dynamic system
\begin{equation}
\dot x =
\begin{pmatrix}
-x_1 - x_2^2 \\
x_1 x_2
\end{pmatrix}
+ 
\begin{pmatrix}
0 \\
1
\end{pmatrix}
u.
\end{equation}
which is constructed using converse optimality (see \cite{Doyle1996-converse-HJB,nevistic1996-constrained-converse,primbs1996-converse}).
The utility function is assumed $r = x^\top x + 0.25 u^2$.
The system is asymptotically stable under the controller $u \coloneqq -x_1 - x_2$ as shown by the Lyapunov theory.
Specifically, consider the positive definite function $V(x)= 0.5 x^\top x$
Its derivative reads as
\begin{equation}
\dot V = x^\top \dot x = -x_1^2 - x_1 x_2 - x_2^2 = - x^\top 
\underbrace{\begin{pmatrix}
1 & 0.5\\
0.5 & 1
\end{pmatrix}}_{=:P} x.
\end{equation}
The matrix $P$ is positive definite, hence $\dot V <0$ for all $x \in \R^n\setminus{0}$, which shows asymptotic stability.
Now, define $k=0.1 \Vert x \Vert^2$ and let $\gamma = 0.01$.
The barrier function $B$ is chosen as $B = - z^{-1}$.
The prescribed vicinities are chosen as $\varepsilon_1 = 0.5$ and $\varepsilon_2 = 0.5$.
The evolution of $\mu$ is chosen as $\dot \mu = - \mu$ with $\mu(0)=1$.
The window for the cost function $J$ (see \eqref{eq:def-Jw}) is chosen as $T=0.1$.

Within this case study, the controller resulting from the proposed approach \ie \eqref{eq:update-rule-actor} and \eqref{eq:update-rule-critic} together with the gains derived in Theorem \ref{thm:convergence}, is compared to the nominal controller from Assumption \ref{ass:lyap} and the optimal controller known from the converse approach.
The initial conditions for the controller using the proposed approach is $v(x(0))$.

The performance mark for the controllers using \eqref{eq:utility_fcn} is considered
\begin{equation}\label{eq:performance-mark}
C=\int\limits_0^{T_{\text{sim}}} r(x,u) \mathrm{d}\tau,
\end{equation}

where $T_{\text{sim}}$ is the simulation time set to reach steady system behavior.
The simulation study is done for $2601$ initial conditions ranging from $-5$ and $5$ in $0.2$ steps for $x_1$ and $x_2$.

Fig. \ref{fig:adp-nom} shows a cost comparison between the control strategies for the considered initial conditions using the said performance mark \eqref{eq:performance-mark}.
That means, the axis of Fig. \ref{fig:adp-nom} show the different intial conditions and the color bar indicates the cost difference between the control strategies based on the performance mark.
\vspace{-1em}
\begin{figure}[h!]
	\centering
	\subfigure{\includegraphics[width=0.24\textwidth]{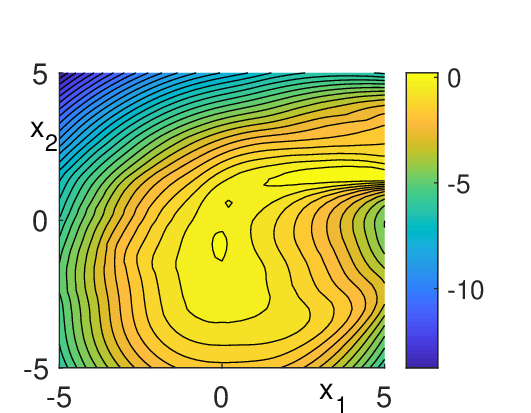}}
	\subfigure{\includegraphics[width=0.24\textwidth]{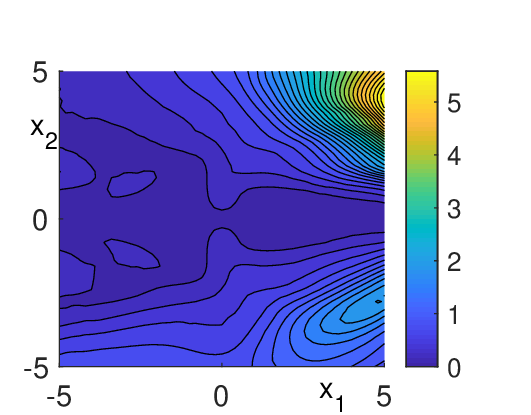}}
	\caption{Left: Difference of the performance mark \eqref{eq:performance-mark} between the proposed approach and the nominal controller. Right: Difference of the performance mark \eqref{eq:performance-mark} between the the proposed approach and the global optimal controller}
	\label{fig:adp-nom}
\end{figure}
For comparison reasons, the performance mark of the nominal controller for the initial state $x(0)=(-5 \; 5)^\top$ is $36$.
\vspace{-1em}
\begin{figure}[h!]
	\centering
	\includegraphics[scale=0.72]{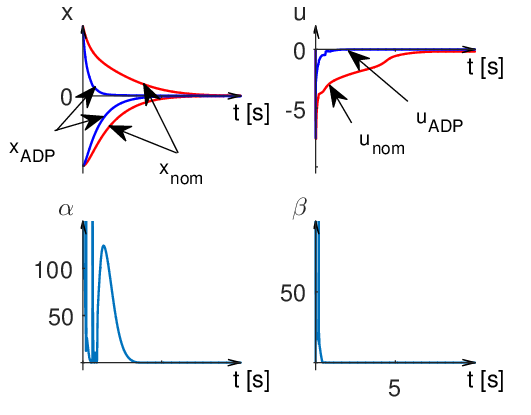}
	\caption{Trajectories of the state, input, $\alpha$ and $\beta$ for the intial state $x(0)=(-1 \; 1)^\top$}
	\label{fig:uti-nom}
\end{figure}
In Fig. \ref{fig:uti-nom}, trajectories of $x,u,\alpha$ and $\beta$ for a particular case of initial conditions are provided.
 
\section{Conclusion}
This paper introduced a particular ADP formulation with stability guarantees.
Based on a Lyapunov function constraint, the actor optimization problem in ADP was modified.
The calculation of the inverse Hessian for the optimization was avoided, by introducing a time-varying gradient descent method.
The resulting optimization problems for the critic and actor were analyzed with respect to convexity, feasibility, existence of solutions and convergence.
The convergence result in this paper is in the sense that convergence to a prescribed vicinity of the time-varying optima are achieved.
A simulation study for different initial conditions was provided, which shows cost improvement compared to a nominal controller.




\bibliography{bib/ADP-and-RL,bib/opt-ctrl,bib/extremum-seeking,bib/DP,bib/nonlin-ctrl,bib/stabilization,bib/dyn-sys,bib/opt-algo} 
\bibliographystyle{IEEEtran}

\end{document}

%% file: ACSD-preamble.tex
\usepackage{xspace}
\usepackage{soul}
\usepackage{mathtools}
\usepackage{makecell}
\usepackage{todonotes}
\usepackage{tikz}
\usepackage{nicefrac}

\newcommand{\ie}{\unskip, i.\,e.\xspace}
\newcommand{\eg}{\unskip, e.\,g.\xspace}

\newcommand{\R}{\ensuremath{\mathbb{R}}}

\newtheorem{theorem}{Theorem}
\newtheorem{lemma}{Lemma}
\newtheorem{assumption}{Assumption}
\newtheorem{remark}{Remark}
\newtheorem{definition}{Definition}
\DeclareMathOperator*{\argmin}{argmin}
